\documentclass[11pt,reqno]{amsart}
\usepackage{amssymb,latexsym}
\usepackage{graphicx}
\usepackage{color} 
\usepackage{url}		
\calclayout

\theoremstyle{plain}
\numberwithin{equation}{section}
\newtheorem{thm}{Theorem}[section]
\newtheorem{theorem}[thm]{Theorem}

\newtheorem{corollary}[thm]{Corollary}
\newtheorem{remark}[thm]{Remark}

\begin{document}
\setcounter{page}{1}

\title[New sums mixing harmonic numbers and central binomial coefficients]
{New sums mixing harmonic numbers and central binomial coefficients}
\author{Michel Bataille}
\address{Independent Researcher, 76520 Franqueville-Saint-Pierre, France}
\email{michelbataille@wanadoo.fr}
\medskip
\author{Robert Frontczak}
\address{Independent Researcher, 72764 Reutlingen, Germany}
\email{robert.frontczak@web.de}

\begin{abstract}
We study two new classes of sums with inverse binomial coefficients and harmonic numbers. In addition we establish recursive
solutions to the following power sums
\begin{equation*}
U_d(n) = \sum_{k=1}^n \frac{2^{2k}}{\binom{2k}{k}} \cdot k^d \quad \mbox{and}\quad  
V_d(n) = \sum_{k=1}^n \frac{2^{2k}}{\binom{2k}{k}}\cdot k^d\,H_k,
\end{equation*}
where $d$ is a positive integer.
\bigskip
\newline
{\sc Key words and phrases}: Sum, inverse binomial coefficient, harmonic number.
\\
{\sc MSC 2020}: 05A10, 11B37, 11B39.  
\end{abstract}


\maketitle

\section{Introduction}

Combinatorial sums involving reciprocals of (central) binomial coefficients are prominent quantities.
They have been studied by many mathematicians in the past and even today there is no dwindling of interest for these sums 
\cite{Belbachir,Mansour,Sprugnoli,Sury,Witula}. A popular identity in this context is the following sum: 
\begin{equation}
\sum_{k=0}^n \frac{1}{\binom{n}{k}} = \frac{n+1}{2^{n+1}} \sum_{k=1}^{n+1} \frac{2^k}{k}.
\end{equation}
This identity was derived by Rockett in 1981 \cite{Rockett} but was known earlier to Comtet \cite[Page 294]{Comtet} 
and Gould \cite[Entry 2.25]{Gould}. For more information on this identity see the articles by Pla \cite{Pla}, Trif \cite{Trif}, 
Batir and his coauthors \cite{Batir1,Batir2,Batir3}, and Mansour who in \cite{Mansour} derived (among other things) an expression 
for the generalized sum of the form
\begin{equation*}
\sum_{k=0}^n \frac{a^k b^{n-k}}{\binom{n}{k}}, \quad a,b\in\mathbb{C}.
\end{equation*}
Another prominent combinatorial identity involving reciprocals of central binomial coefficients is (\cite{Adegoke1,Batir2,Sprugnoli,Witula})
\begin{equation}\label{central_bin0}
\sum_{k=0}^n  \frac{2^{2k}}{\binom{2k}{k}} = \frac{1}{3}\left ( (n+1)\frac{2^{2n+1}}{\binom{2n}{n}} + 1 \right ),
\end{equation}
which can be expressed equivalently in terms of Catalan numbers $C_n=\frac{1}{n+1}\binom{2n}{n}$ as
\begin{equation*}
\sum_{k=0}^n  \frac{2^{2k}}{\binom{2k}{k}} = \frac{1}{3}\left ( \frac{2^{2n+1}}{C_n} + 1 \right ).
\end{equation*}
Identity \eqref{central_bin0} was generalized recently by Bataille and Frontczak in \cite[Theorem 4.2]{Bataille}: 
For integers $m,n\ge 0$ we have 
\begin{equation}\label{central_bin}
\sum_{k=0}^n 2^{2k} \frac{\binom{m+k}{k}}{\binom{2k}{k}} 
= \frac{1}{2m+3}\left ( (m+n+1)\frac{\binom{m+n}{n}}{\binom{2n}{n}}2^{2n+1} + 1 \right ).
\end{equation}

Bataille and Frontczak proved \eqref{central_bin} by induction on $n$. A closer look shows, however, that this statement is actually true for all $m\in\mathbb C\setminus\mathbb Z^{-}$ and $m\neq -3/2$. This will be crucial in some of the proofs below.\\

In this paper, we derive identities for two (probably) new classes of combinatorial sums involving harmonic numbers 
that are associated with \eqref{central_bin}. For instance, we will show that the harmonic sum counterparts of \eqref{central_bin0}
are given by 
\begin{equation}\label{central_bin0_har}
\sum_{k=0}^n \frac{2^{2k}}{\binom{2k}{k}} H_k = - \frac{2}{9} + \frac{2^{2n+1}}{3\binom{2n}{n}}\left ( (n+1)H_n - \frac{2n-1}{3} \right ) 
\end{equation}
and
\begin{equation}\label{central_bin_evenhar}
\sum_{k=0}^n \frac{2^{2k}}{\binom{2k}{k}} H_{2k} = \frac{1}{9} + \frac{2^{2n}}{3 \binom{2n}{n}}\left (2(n+1)H_{2n} - \frac{4n+1}{3}\right ),
\end{equation}
both seeming to be new results. Moreover, we establish recursion formulas for the sums
\begin{equation*}
U_d(n) = \sum_{k=1}^n \frac{2^{2k}}{\binom{2k}{k}} \cdot k^d \quad \mbox{and}\quad  
V_d(n) = \sum_{k=1}^n \frac{2^{2k}}{\binom{2k}{k}}\cdot k^d\,H_k,
\end{equation*}
where $d$ is a positive integer. \\

We conclude the introductory section with some definitions and preliminary observations. \\
For complex numbers $r$ and $s$, the generalized binomial coefficients are defined by
\begin{equation*}
\binom {r}{s} = \frac{\Gamma (r+1)}{\Gamma (s+1) \Gamma (r-s+1)},
\end{equation*}
where the Gamma function, $\Gamma(z)$, is defined for $\Re(z)>0$ by the integral \cite{Srivastava}
\begin{equation*}
\Gamma (z) = \int_0^\infty e^{- t} t^{z - 1}\,dt.
\end{equation*}
The function $\Gamma(z)$ can be extended to the whole complex plane by analytic continuation. 
It has a simple pole at each of the points $z=\cdots,-3,-2,-1,0$. 
The Gamma function extends the classical factorial function to the complex plane by $(z-1)!=\Gamma(z)$,
which shows that, if $r$ and $s$ are non-negative integers then we get the usual binomial coefficients
\begin{equation*}
\binom {r}{s} =
\begin{cases}
\dfrac{r!}{s!(r - s)!}, & \text{$r\geq s$};\\
0, & \text{$r<s$}.
\end{cases}
\end{equation*} 

Harmonic numbers $H_z$ and odd harmonic numbers $O_z$ are defined for $0\ne z\in\mathbb C\setminus\mathbb Z^{-}$ by the recurrence relations
\begin{equation*}
H_z = H_{z - 1} + \frac{1}{z} \qquad \text{and} \qquad O_z = O_{z - 1} + \frac{1}{2z - 1},
\end{equation*}
with $H_0=0$ and $O_0=0$. Harmonic numbers are connected to the digamma function through the fundamental relation \cite{Srivastava}
\begin{equation}\label{Har_psi}
H_z = \psi(z + 1) + \gamma,
\end{equation}
where $\gamma$ is the Euler-Mascheroni constant and $\psi(z)=\Gamma'(z)/\Gamma(z)$ is the digamma (or psi) function. 
The fundamental relation \eqref{Har_psi} shows that harmonic numbers must be interpreted in general as digamma functions.
In particular, at rational arguments $z=p/q$ harmonic numbers may be evaluated using the corresponding digamma (or polygamma) values.
When each $z$ is a positive integer, say $n$, we have the sequences of harmonic and odd harmonic numbers $H_n$ and $O_n$, respectively,
and the recurrence relations give
\begin{equation*}
H_n =\sum_{k=1}^n \frac{1}{k} \qquad \text{and} \qquad O_n =\sum_{k=1}^n \frac{1}{2k-1}.
\end{equation*}
Obvious relations between harmonic numbers $H_n$ and odd harmonic numbers $O_n$ are given by
\begin{equation}\label{eq.har_oddhar}
H_{2n} = \frac{1}{2} H_n + O_n \qquad \text{and} \qquad H_{2n - 1} = \frac{1}{2}H_{n - 1} + O_n.
\end{equation}
More such relations exist and will be used later.  

\section{A first result}

In this section, we present the first main finding of the paper.

\begin{theorem}\label{main_thm1}
For all $n\geq 0$ and all $m\in\mathbb C\setminus\mathbb Z^{-}$ and $m\neq -3/2,$ we have
\begin{align}
\sum_{k=0}^n 2^{2k} \frac{\binom{m+k}{k}}{\binom{2k}{k}} H_{k+m} &= \frac{1}{2m+3}\left ( 2^{2n+1} (m+n+1)\frac{\binom{m+n}{n}}{\binom{2n}{n}} H_{n+m} + H_m \right ) \nonumber \\
&\qquad - \frac{2}{(2m+3)^2} \left (2^{2n} (2n-1)\frac{\binom{m+n}{n}}{\binom{2n}{n}} + 1\right ).
\end{align}
\end{theorem}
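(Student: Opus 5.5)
The plan is to differentiate identity \eqref{central_bin} with respect to the parameter $m$. The paper remarks that \eqref{central_bin} holds for all $m\in\mathbb C\setminus\mathbb Z^{-}$ with $m\neq -3/2$. For fixed $n$, both sides are analytic (in fact rational) functions of $m$ on that domain, so the identity may be differentiated termwise.

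\textbf{Derivative of the binomial factor.} Write $\binom{m+k}{k}=\Gamma(m+k+1)/(\Gamma(k+1)\Gamma(m+1))$. Then \eqref{Har_psi} gives
\begin{equation*}
\frac{d}{dm}\binom{m+k}{k}=\binom{m+k}{k}\bigl(\psi(m+k+1)-\psi(m+1)\bigr)=\binom{m+k}{k}\bigl(H_{m+k}-H_m\bigr).
\end{equation*}
Differentiating the left side of \eqref{central_bin} therefore produces
\begin{equation*}
\sum_{k=0}^n 2^{2k}\frac{\binom{m+k}{k}}{\binom{2k}{k}}H_{k+m}\;-\;H_m\sum_{k=0}^n 2^{2k}\frac{\binom{m+k}{k}}{\binom{2k}{k}}.
\end{equation*}
The second sum is known from \eqref{central_bin} itself. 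The target sum is then the derivative of the right side plus $H_m$ times the right side of \eqref{central_bin}.

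\textbf{Derivative of the right side.} Set $A=2^{2n+1}\binom{m+n}{n}/\binom{2n}{n}$, so the right side is $R=\bigl((m+n+1)A+1\bigr)/(2m+3)$. Using $A'=A(H_{m+n}-H_m)$,
\begin{equation*}
R'=-\frac{2}{(2m+3)^2}\bigl((m+n+1)A+1\bigr)+\frac{1}{2m+3}\Bigl(A+(m+n+1)A\,(H_{m+n}-H_m)\Bigr).
\end{equation*}
Adding $H_mR$ cancels the $-(m+n+1)AH_m/(2m+3)$ term and contributes $H_m/(2m+3)$. This leaves the main part
\begin{equation*}
\frac{1}{2m+3}\bigl((m+n+1)A\,H_{m+n}+H_m\bigr),
\end{equation*}
which matches the first line of the theorem.

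\textbf{Collecting the remaining terms.} What is left is
\begin{equation*}
\frac{A}{2m+3}-\frac{2\bigl((m+n+1)A+1\bigr)}{(2m+3)^2}=\frac{A\bigl((2m+3)-2(m+n+1)\bigr)-2}{(2m+3)^2}=\frac{A(1-2n)-2}{(2m+3)^2}.
\end{equation*}
Since $A=2\cdot 2^{2n}\binom{m+n}{n}/\binom{2n}{n}$, this equals
\begin{equation*}
-\frac{2}{(2m+3)^2}\Bigl(2^{2n}(2n-1)\frac{\binom{m+n}{n}}{\binom{2n}{n}}+1\Bigr),
\end{equation*}
which is exactly the second line of the theorem.

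The main point needing care is justifying the differentiation. Because \eqref{central_bin} is asserted for all complex admissible $m$, and each side is a finite combination of analytic functions of $m$ there, the identity of analytic functions passes to derivatives. The algebra itself is routine.
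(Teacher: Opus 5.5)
Your proposal is correct and follows the paper's proof exactly. You differentiate \eqref{central_bin} in $m$ using $\frac{d}{dm}\binom{m+k}{k}=\binom{m+k}{k}(H_{k+m}-H_m)$, reuse \eqref{central_bin} to absorb the $H_m$-weighted sum, and your leftover term $\bigl(A(1-2n)-2\bigr)/(2m+3)^2$ checks out. Your justification for differentiating is also sound: for fixed $n$ both sides are rational in $m$, since $\binom{m+k}{k}$ is a polynomial in $m$.
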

\begin{proof}
Differentiate \eqref{central_bin} w.r.t. $m$ and use
$$\frac{d}{dm} \binom{m+k}{k} = \binom{m+k}{k} (H_{k+m}-H_m).$$
When simplifying use \eqref{central_bin} again.
\end{proof}

Sums "inverse in structure" and alternating in signs can be found in the articles by Chu \cite{Chu} and Jin and Du \cite{Jin}.
For instance, we have the evaluation
\begin{equation*}
\sum_{k=0}^n (-1)^k \frac{\binom{n}{k}}{\binom{m+k}{k}} H_k = \frac{m}{n+m} \left ( H_{m-1} - H_{n+m-1} \right ).
\end{equation*}

\begin{corollary}
For all $n\geq 0$ we have
\begin{equation}\label{central_bin_har1}
\sum_{k=0}^n \frac{2^{2k}}{\binom{2k}{k}} H_k = \frac{2^{2n+1}}{3\binom{2n}{n}}\left ( (n+1)H_n - \frac{2n-1}{3} \right ) - \frac{2}{9},
\end{equation}
and
\begin{equation}\label{central_bin_har2}
\sum_{k=0}^n \frac{2^{2k}}{\binom{2k}{k}} \binom{n+k}{k} H_{n+k} = \frac{2^{2n+1}}{2n+3}\left ( (2n+1)H_{2n} - \frac{2n-1}{2n+3} \right )
+ \frac{1}{2n+3}\left ( H_{n} - \frac{2}{2n+3} \right ).
\end{equation}
\end{corollary}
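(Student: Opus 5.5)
Both identities come from specializing $m$ in Theorem~\ref{main_thm1}; the only work is simplifying the right-hand side.

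\emph{First identity.} Set $m=0$, which is allowed. Then $\binom{m+k}{k}=1$, $H_{k+m}=H_k$, $H_m=H_0=0$ and $2m+3=3$. Theorem~\ref{main_thm1} gives
\[
\sum_{k=0}^n \frac{2^{2k}}{\binom{2k}{k}} H_k=\frac{2^{2n+1}(n+1)H_n}{3\binom{2n}{n}}-\frac{2}{9}\left(\frac{2^{2n}(2n-1)}{\binom{2n}{n}}+1\right).
\]
Writing $\frac{2}{9}\,2^{2n}(2n-1)=\frac{2^{2n+1}}{3}\cdot\frac{2n-1}{3}$ and factoring out $\frac{2^{2n+1}}{3\binom{2n}{n}}$ yields \eqref{central_bin_har1}, constant $-\tfrac29$ included.

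\emph{Second identity.} Set $m=n$, a nonnegative integer, so the theorem applies. The left side becomes $\sum_{k=0}^n \frac{2^{2k}}{\binom{2k}{k}}\binom{n+k}{k}H_{n+k}$. On the right, $\binom{m+n}{n}=\binom{2n}{n}$ cancels against the denominator $\binom{2n}{n}$, so all binomial quotients disappear. We are left with
\[
\frac{1}{2n+3}\bigl(2^{2n+1}(2n+1)H_{2n}+H_n\bigr)-\frac{2}{(2n+3)^2}\bigl(2^{2n}(2n-1)+1\bigr).
\]
Since $\frac{2\cdot 2^{2n}(2n-1)}{(2n+3)^2}=\frac{2^{2n+1}}{2n+3}\cdot\frac{2n-1}{2n+3}$, grouping the $2^{2n+1}$ terms and the remaining terms gives exactly \eqref{central_bin_har2}.

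The derivation has no real obstacle. The only step needing care is the admissibility of the substitution $m=n$: this is legitimate because Theorem~\ref{main_thm1} holds for each fixed admissible $m$ and all $n$, so we may take $m$ equal to the particular $n$ under consideration. For safety, I would also check \eqref{central_bin_har2} at $n=0$ and $n=1$. At $n=0$ the left side is $0$ and the right side is $2\cdot\frac19-\frac{2}{9}=0$, which agrees.
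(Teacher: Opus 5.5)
Your proposal is correct and is exactly the paper's argument: specialize Theorem~\ref{main_thm1} at $m=0$ and at $m=n$, then simplify. Your algebra in both cases checks out, including the regrouping of the $2^{2n+1}$ terms and the constants $-\tfrac{2}{9}$ and $-\tfrac{2}{(2n+3)^2}$.
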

\begin{proof}
Set $m=0$ and $m=n$, in turn, in Theorem \ref{main_thm1} and simplify.
\end{proof}

\begin{remark}
We note that the odd harmonic number counterpart of identity \eqref{central_bin_har1} is known. It was discovered recently independently by 
Campbell, and Batir and Sofo:
\begin{equation}\label{central_bin_ohar}
\sum_{k=0}^n \frac{2^{2k}}{\binom{2k}{k}} O_k = \frac{2}{9} + \frac{2}{9}\frac{2^{2n}}{\binom{2n}{n}}(n+1)(3O_n - 1).
\end{equation}
See Example 11 and Remark 3 in \cite{Batir2}. This can be used to establish a closed form for the sum containing even harmonic numbers.
\end{remark}

\begin{corollary}
For all $n\geq 0$ we have
\begin{equation}\label{central_bin_evenhar}
\sum_{k=0}^n \frac{2^{2k}}{\binom{2k}{k}} H_{2k} = \frac{1}{9} + \frac{2^{2n}}{3 \binom{2n}{n}}\left (2(n+1)H_{2n} - \frac{4n+1}{3}\right ).
\end{equation}
\end{corollary}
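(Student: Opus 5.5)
We split the even harmonic numbers with \eqref{eq.har_oddhar}, $H_{2k}=\tfrac12 H_k+O_k$, which holds for every $k\ge 0$ (both sides vanish at $k=0$). Multiplying by $2^{2k}/\binom{2k}{k}$ and summing over $0\le k\le n$ gives
\begin{equation*}
\sum_{k=0}^n \frac{2^{2k}}{\binom{2k}{k}} H_{2k} = \frac12\sum_{k=0}^n \frac{2^{2k}}{\binom{2k}{k}} H_k + \sum_{k=0}^n \frac{2^{2k}}{\binom{2k}{k}} O_k .
\end{equation*}
We evaluate the first sum on the right by \eqref{central_bin_har1} and the second by the Campbell--Batir--Sofo identity \eqref{central_bin_ohar}.

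Write $c_n = 2^{2n}/\binom{2n}{n}$. Half of \eqref{central_bin_har1} equals
\begin{equation*}
\frac{c_n}{3}\left((n+1)H_n-\frac{2n-1}{3}\right)-\frac19 ,
\end{equation*}
and \eqref{central_bin_ohar} contributes $\frac29+\frac{2}{9}c_n(n+1)(3O_n-1)$. The constants add to $-\frac19+\frac29=\frac19$, which is the constant in the claim. For the part proportional to $c_n$, we group the harmonic terms as $\frac{c_n}{3}(n+1)(H_n+2O_n)$. By \eqref{eq.har_oddhar} again, $H_n+2O_n=2H_{2n}$, so this group equals $\frac{c_n}{3}\cdot 2(n+1)H_{2n}$.

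The remaining pieces are rational multiples of $c_n$:
\begin{equation*}
-\frac{c_n(2n-1)}{9}-\frac{2c_n(n+1)}{9} = -\frac{c_n(4n+1)}{9} = \frac{c_n}{3}\left(-\frac{4n+1}{3}\right),
\end{equation*}
which is exactly the claimed term. Summing the three parts gives the statement.

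The only delicate point is applying \eqref{central_bin_ohar} in the normalization quoted in the Remark. Since that identity is taken from the literature, we would check it quickly at $n=0$ (both sides equal $0$) and at $n=1$ (left side $2$; right side $\frac29+\frac29\cdot2\cdot2\cdot2=2$). The rest is routine recombination via $H_{2n}=\frac12H_n+O_n$.
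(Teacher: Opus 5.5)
Your proposal is correct and is exactly the paper's argument: split $H_{2k}=\tfrac12H_k+O_k$, evaluate the two pieces with \eqref{central_bin_har1} and \eqref{central_bin_ohar}, and recombine using $H_n+2O_n=2H_{2n}$. Your recombination of the constants, harmonic terms and rational terms checks out.
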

\begin{proof}
Combine \eqref{central_bin_har1} with \eqref{central_bin_ohar} via the first part of \eqref{eq.har_oddhar}.
\end{proof}

Two other special cases of Theorem \ref{main_thm1} allow us to rediscover two summation identities involving odd harmonic numbers.

\begin{corollary}
For all $n\geq 0$ we have
\begin{equation}\label{central_bin_ohar1}
\sum_{k=0}^n (2k+1) O_{k+1} = \frac{1}{4}\left ( (2n+1)(2n+3)O_{n+1} - (n-1)(n+1) \right )
\end{equation}
and
\begin{equation}\label{central_bin_ohar2}
\sum_{k=0}^n O_{k} = \left (n+\frac{1}{2}\right ) O_{n} - \frac{n}{2}.
\end{equation}
\end{corollary}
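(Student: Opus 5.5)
The plan is to specialize Theorem \ref{main_thm1} at the two half-integer values $m=\tfrac12$ and $m=-\tfrac12$. Both are admissible, since neither is a negative integer nor equal to $-3/2$. At these values the weight $2^{2k}\binom{m+k}{k}/\binom{2k}{k}$ collapses to an elementary factor, and the harmonic numbers at half-integers become odd harmonic numbers.

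\textbf{Step 1: the two weights.} Using $\binom{r}{s}=\Gamma(r+1)/(\Gamma(s+1)\Gamma(r-s+1))$ together with $\Gamma(k+\tfrac12)=\frac{(2k)!}{4^k k!}\sqrt{\pi}$, one checks
\begin{equation*}
\binom{k-\frac12}{k}=\frac{\binom{2k}{k}}{2^{2k}},\qquad \binom{k+\frac12}{k}=\frac{(2k+1)\binom{2k}{k}}{2^{2k}} .
\end{equation*}
Hence the weight in Theorem \ref{main_thm1} equals $1$ for $m=-\tfrac12$ and equals $2k+1$ for $m=\tfrac12$. The same formulas at $k=n$ simplify the right-hand side.

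\textbf{Step 2: the harmonic numbers.} From \eqref{Har_psi} and the standard value $\psi(\tfrac12)=-\gamma-2\ln 2$, we get $H_{-1/2}=-2\ln2$. The recurrence $H_z=H_{z-1}+1/z$ then gives
\begin{equation*}
H_{k-\frac12}=2O_k-2\ln 2,\qquad H_{k+\frac12}=2O_{k+1}-2\ln2,
\end{equation*}
and in particular $H_{1/2}=2-2\ln2$.

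\textbf{Step 3: substitution.} For $m=\tfrac12$, where $2m+3=4$, the left side of Theorem \ref{main_thm1} becomes $\sum_{k=0}^n(2k+1)(2O_{k+1}-2\ln2)$. The right side becomes
\begin{equation*}
\tfrac14\bigl(2(n+\tfrac32)(2n+1)H_{n+1/2}+H_{1/2}\bigr)-\tfrac18\bigl((2n-1)(2n+1)+1\bigr).
\end{equation*}
The $\ln2$ terms must cancel against $-2\ln2\sum_{k=0}^n(2k+1)=-2\ln2\,(n+1)^2$. This cancellation is exactly the identity \eqref{central_bin} at $m=\tfrac12$, so it can be invoked rather than checked by hand. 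After dividing by $2$, this should give \eqref{central_bin_ohar1}.

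For $m=-\tfrac12$, where $2m+3=2$, the left side is $\sum_{k=0}^n(2O_k-2\ln2)$. The right side is
\begin{equation*}
\tfrac12\bigl((2n+1)H_{n-1/2}+H_{-1/2}\bigr)-\tfrac12\bigl(2n-1+1\bigr).
\end{equation*}
Again the $\ln2$ terms cancel by \eqref{central_bin} at $m=-\tfrac12$. Halving gives $\sum O_k=(n+\tfrac12)O_n-\tfrac n2$, which is \eqref{central_bin_ohar2}.

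\textbf{Main obstacle.} The only delicate point is the bookkeeping of constants: the correct half-integer values of $H_{\pm1/2}$, and confirming that the $\ln2$ contributions cancel exactly. If anything goes wrong there, a direct check at $n=0$ will catch it, because both identities then reduce to trivial statements.
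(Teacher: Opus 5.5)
Your proposal is correct and follows the paper's proof: you specialize Theorem~\ref{main_thm1} at $m=\tfrac12$ and $m=-\tfrac12$, reduce the weights to $2k+1$ and $1$, and rewrite $H_{k\pm 1/2}$ as $2O_{\cdot}-2\ln 2$. Where the paper finishes by ``comparing rational and irrational parts,'' you note that the $\ln 2$ terms cancel exactly by \eqref{central_bin} at $m=\pm\tfrac12$, which is a slightly cleaner ending because it needs no appeal to the irrationality of $\ln 2$.
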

\begin{proof}
To obtain the first sum set $m=1/2$ in Theorem \ref{main_thm1}. The binomial coefficient relation
$$\binom{u+1/2}{v} = 2^{-2v} \binom{2u+1}{2v} \frac{\binom{2v}{v}}{\binom{u}{v}}$$ 
yields
$$\frac{\binom{k+1/2}{k}}{\binom{2k}{k}} = 2^{-2k} (2k+1)$$
and
$$\sum_{k=0}^n (2k+1) H_{k+1/2} = \frac{1}{4}\left ( (2n+1)(2n+3)H_{n+1/2} + H_{1/2} \right ) -\frac{1}{8} \left ((2n-1)(2n+1)+1 \right ). 
$$
Next, use (see \cite{Adegoke0} for a proof)
$$H_{n+1/2} = 2O_{n+1} - 2 \ln(2),$$
and the result follows by comparing the rational and irrational parts. Similarly, with $m=-1/2$, the second sum is obtained using
$$\binom{u-1/2}{v} = 2^{-2v} \binom{u}{v} \frac{\binom{2u}{u}}{\binom{2(u-v)}{u-v}}$$
and again comparing the rational and irrational parts.
\end{proof}

\section{Two recursive results}

In this section, we study other generalizations of formulas \eqref{central_bin0} and \eqref{central_bin0_har} by establishing recursion formulas for the sums
\begin{equation*}
U_d(n) = \sum_{k=1}^n \frac{2^{2k}}{\binom{2k}{k}} \cdot k^d \quad \mbox{and}\quad  
V_d(n) = \sum_{k=1}^n \frac{2^{2k}}{\binom{2k}{k}}\cdot k^d\,H_k,
\end{equation*}
where $d$ is a positive integer. We first prove the following theorem.

\begin{theorem}\label{main_thm_rec1} 
Let $d,n$ be positive integers. Then, 
\begin{equation}
(2d+3)U_d(n) = \frac{(n+1)2^{2n+1}n^d}{\binom{2n}{n}} - \sum_{j=1}^d (-1)^j c_{d,j} U_{d-j}(n)
\end{equation}
with
$$U_0(n) = \frac{1}{3}\left( \frac{(n+1)2^{2n+1}}{\binom{2n}{n}}-2\right),$$
and
\begin{equation}\label{c_term}
c_{d,j} = \binom{d+1}{j+1} + \binom{d}{j+1}.
\end{equation}
\end{theorem}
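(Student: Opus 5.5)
The plan is a telescoping argument built on the ratio of consecutive terms. Write $a_k = 2^{2k}/\binom{2k}{k}$. Since $\binom{2k}{k}/\binom{2k-2}{k-1} = 2(2k-1)/k$, we get $a_k/a_{k-1} = 2k/(2k-1)$, that is,
\begin{equation*}
(2k-1)a_k = 2k\,a_{k-1}, \qquad k\ge 1.
\end{equation*}
The right-hand side of the theorem is $g(n)$, where $g(k) = 2(k+1)k^d a_k$. So I will compute the difference $g(k)-g(k-1)$ and sum it over $k=1,\dots,n$.

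Using the ratio identity,
\begin{equation*}
g(k)-g(k-1) = 2(k+1)k^d a_k - 2k(k-1)^d a_{k-1} = a_k\bigl(2(k+1)k^d - (2k-1)(k-1)^d\bigr).
\end{equation*}
Next I expand $(2k-1)(k-1)^d = \sum_{j=0}^d \binom{d}{j}(-1)^j\bigl(2k^{d-j+1} - k^{d-j}\bigr)$ and collect powers of $k$.

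\begin{itemize}
\item The $k^{d+1}$ terms cancel.
\item The $k^d$ terms are $2k^d + 2d\,k^d + k^d = (2d+3)k^d$.
\item For $1\le j\le d$, the coefficient of $k^{d-j}$ is $(-1)^j\bigl(2\binom{d}{j+1}+\binom{d}{j}\bigr)$. Pascal's rule gives $2\binom{d}{j+1}+\binom{d}{j} = \binom{d+1}{j+1}+\binom{d}{j+1} = c_{d,j}$, matching \eqref{c_term}.
\end{itemize}

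This yields the polynomial identity
\begin{equation*}
2(k+1)k^d - (2k-1)(k-1)^d = (2d+3)k^d + \sum_{j=1}^d (-1)^j c_{d,j}\, k^{d-j}.
\end{equation*}
Multiplying by $a_k$ and summing over $1\le k\le n$, the left side telescopes to $g(n)-g(0)$. Since $d\ge 1$, we have $g(0)=0$. The right side becomes $(2d+3)U_d(n)+\sum_{j=1}^d(-1)^j c_{d,j}U_{d-j}(n)$, which is the claimed recursion.

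For the initial value, $U_0(n)$ is \eqref{central_bin0} with the $k=0$ term (equal to $1$) removed. This gives $\frac13\bigl((n+1)2^{2n+1}/\binom{2n}{n}+1\bigr)-1$, the stated formula.

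The only delicate step is the coefficient bookkeeping in the expansion, including the sign conventions and the Pascal-rule identification with $c_{d,j}$. I would check it on $d=1$ directly: $2(k+1)k-(2k-1)(k-1) = 5k-1$, and $c_{1,1}=1$ with sign $(-1)^1$ reproduces the $-1$.
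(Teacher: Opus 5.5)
Your proof is correct and is essentially the paper's argument. The paper applies Abel summation with $u_k=2^{2k}/\binom{2k}{k}$ and $a_k=(2k-1)(k-1)^d$ (note: its $a_k$ is not your $a_k$); its boundary term $a_{k+1}u_{k+1}=2(k+1)k^d u_k$ is exactly your $g(k)$. Your polynomial identity is the paper's expansion of $a_{k+1}-a_k$ with the extra $k^d$ from $U_d(n)$ absorbed, which gives $2d+3$ where the paper has $2d+2$, and you use the same Pascal-rule identification of $c_{d,j}$.
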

\begin{proof}
We adopt the following notations
$$u_k = \frac{2^{2k}}{\binom{2k}{k}},\qquad  a_k = (2k-1)(k-1)^d.$$
Since $u_0=1$, the above value of $U_0(n)$ readily follows from \eqref{central_bin0}. Next, using 
$$\binom{2k+2}{k+1} = \frac{2(2k+1)}{k+1}\binom{2k}{k}$$
a short calculation gives
$$(2k+1)(u_{k+1}-u_k) = u_k,$$
from which we deduce
$$U_d(n) = \sum_{k=0}^n k^d u_k = \sum_{k=0}^n k^d (2k+1)(u_{k+1} - u_k) = \sum_{k=0}^n a_{k+1}(u_{k+1}-u_k).$$
With the help of Abel's transformation (i.e. summation by parts), we obtain
$$U_d(n) = a_{n+1}u_{n+1} - \sum_{k=1}^n (a_{k+1}-a_k) u_k.$$
We have
\begin{align*} 
a_{k+1}-a_k&=(2k+1)k^d-(2k-1)\sum_{j=0}^d\binom{d}{j}(-1)^j k^{d-j}\\
&=(2k+1)k^d-(2k-1)k^d-(2k-1)\sum_{j=1}^d\binom{d}{j}(-1)^j k^{d-j}\\
&=(2d+2)k^d-2\sum_{j=2}^d \binom{d}{j}(-1)^j k^{d-j+1}+\sum_{j=1}^d\binom{d}{j}(-1)^j k^{d-j}\\
&=(2d+2)k^d+\sum_{j=1}^d\left(2\binom{d}{j+1}+\binom{d}{j}\right)(-1)^jk^{d-j}\\
&=(2d+2)k^d+\sum_{j=1}^d c_{d,j}(-1)^j k^{d-j}.
\end{align*}
Therefore
$$U_d(n) = a_{n+1}u_{n+1} - (2d+2) U_d(n) - \sum_{j=1}^d c_{d,j}(-1)^j U_{d-j}$$
and the theorem immediately follows.
\end{proof}

A straightforward calculation leads to the explicit values of $U_1(n), U_2(n)$ and $U_3(n)$:

\begin{corollary}
For any positive integer $n$, we have
\begin{equation}
\sum_{k=1}^n \frac{2^{2k}}{\binom{2k}{k}}\cdot k = \frac{1}{15} \left(\frac{(3n+1)(n+1)2^{2n+1}}{\binom{2n}{n}}-2\right),
\end{equation}
\begin{equation}
\sum_{k=1}^n \frac{2^{2k}}{\binom{2k}{k}}\cdot k^2 = \frac{1}{105}\left(\frac{(15n^2+12n-1)(n+1)2^{2n+1}}{\binom{2n}{n}}+2\right),
\end{equation}
and
\begin{equation}
\sum_{k=1}^n \frac{2^{2k}}{\binom{2k}{k}}\cdot k^3 = \frac{1}{945}\left(\frac{(105n^3+135n^2+3n-9)(n+1)2^{2n+1}}{\binom{2n}{n}}+18\right).
\end{equation}
\end{corollary}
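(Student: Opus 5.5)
The three identities follow from Theorem \ref{main_thm_rec1} applied successively with $d=1,2,3$. Put $W(n)=\frac{(n+1)2^{2n+1}}{\binom{2n}{n}}$. All the sums $U_d(n)$ will turn out to be of the form $\frac{1}{N_d}\bigl(P_d(n)\,W(n)+r_d\bigr)$ with $P_d$ a polynomial and $r_d$ a constant. The boundary term in the recursion is exactly $n^d W(n)$. The natural plan is to substitute the previously obtained closed forms into the right-hand side, collect the coefficient of $W(n)$ and the constant term separately, and divide by $2d+3$.

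First I compute the coefficients $c_{d,j}=\binom{d+1}{j+1}+\binom{d}{j+1}$ from \eqref{c_term}:
\begin{itemize}
\item $d=1$: $c_{1,1}=1$.
\item $d=2$: $c_{2,1}=3+1=4$ and $c_{2,2}=1$.
\item $d=3$: $c_{3,1}=6+3=9$, $c_{3,2}=4+1=5$ and $c_{3,3}=1$.
\end{itemize}

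For $d=1$ the recursion reads $5U_1=nW+U_0$. With $U_0=\frac13(W-2)$ this gives
$$5U_1=\tfrac13\bigl((3n+1)W-2\bigr),$$
which is the first identity.

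For $d=2$ the recursion is $7U_2=n^2W+4U_1-U_0$. Putting everything over the denominator $15$, the coefficient of $W$ is
$$15n^2+4(3n+1)-5=15n^2+12n-1,$$
and the constant term is $-8+10=2$. Dividing by $7$ gives the denominator $105$, matching the second identity.

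For $d=3$ the recursion is $9U_3=n^3W+9U_2-5U_1+U_0$. Over the common denominator $105$, the coefficient of $W$ is
$$105n^3+9(15n^2+12n-1)-35(3n+1)+35=105n^3+135n^2+3n-9,$$
and the constant term is $18+14-70=-38$.

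This $d=3$ step is where I expect the only real difficulty, namely the bookkeeping of the constants. Here the stated value is $+18$ over the denominator $945$, whereas the computation above gives $-38$ over $945$. So I will check the constant against a small case, say $n=1$. There $U_3(1)=2$ and $W(1)=8$. The stated formula gives $(192\cdot 8+18)/945=1554/945\neq 2$, while using $-38$ gives $1498/945$, also not equal to $2$.

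Before resolving this I would recheck the signs. In Theorem \ref{main_thm_rec1} the sum is subtracted with the factor $(-1)^j$, so $j$ odd contributes $+c\,U$ and $j$ even contributes $-c\,U$. I would recheck the $d=1$ and $d=2$ cases at $n=1$ in the same way, and recompute $U_2$ numerically before trusting the $d=3$ constant. The plan therefore is mechanical substitution together with numerical verification at $n=1,2$ of each closed form, including the base case $U_0$ and the recursion's boundary term.
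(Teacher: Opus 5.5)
Your method is the same as the paper's: substitute the earlier closed forms into the recursion of Theorem \ref{main_thm_rec1} for $d=1,2,3$. Your values of $c_{d,j}$ are right, and so is the sign convention ($+c_{d,j}U_{d-j}$ for odd $j$, $-c_{d,j}U_{d-j}$ for even $j$). The recursion $9U_3=n^3W+9U_2-5U_1+U_0$ and your $d=1$ and $d=2$ computations are also correct.

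The gap is the $d=3$ step. There you conclude that the third identity fails at $n=1$ and leave it unproved. Both the apparent discrepancy and the failed check come from two arithmetic slips; the statement itself is fine.

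\emph{The constant term.} Over the denominator $105$ you have $U_1=\frac{7(3n+1)W-14}{105}$. Hence $-5U_1$ contributes $+70$ to the constant, not $+14$: you multiplied the $W$-part by $5$ (getting $-35(3n+1)$) but dropped the factor $5$ on the constant. The constant is therefore $18+70-70=18$. This gives $9U_3=\frac{(105n^3+135n^2+3n-9)W+18}{105}$, which is exactly the stated formula after dividing by $9$.

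\emph{The numerical check.} You used $192$, but $105+135+3-9=234$. Then $234\cdot 8+18=1890=945\cdot 2=945\,U_3(1)$, so the stated identity holds at $n=1$. It also holds at $n=2$: $1377\cdot 16+18=22050=945\cdot\frac{70}{3}$, and $\frac{70}{3}=U_3(2)$.

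With these two corrections your argument proves all three identities.
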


Our next theorem shows how to calculate $V_d(n)$ \textit{via} a recursion formula and the value of $U_d(n)$.

\begin{theorem}\label{main_thm_rec2} 
Let $d,n$ be positive integers. Then, 
\begin{equation}
(2d+3)V_d(n) = \frac{(n+1)2^{2n+1} n^d H_{n+1}}{\binom{2n}{n}} - \sum_{j=1}^d (-1)^j c_{d,j}V_{d-j}(n) - 2U_d(n),
\end{equation}
where the coefficients $c_{d,j}$ are given in \eqref{c_term} and where the value of $V_0(n)$ is provided by \eqref{central_bin0_har}.
\end{theorem}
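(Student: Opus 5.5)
We mimic the proof of Theorem \ref{main_thm_rec1}, with $u_k = 2^{2k}/\binom{2k}{k}$ and $a_k=(2k-1)(k-1)^d$ as before. We set $h_k = H_k$ and work with the product $u_k H_k$. From $(2k+1)(u_{k+1}-u_k)=u_k$ we get
$$k^d u_k H_k = a_{k+1}(u_{k+1}-u_k)H_k .$$
It is convenient to rewrite $H_k = H_{k+1} - \frac{1}{k+1}$, so that
$$k^d u_k H_k = a_{k+1}\bigl(u_{k+1}H_{k+1} - u_k H_k\bigr) - a_{k+1}u_k\bigl(H_{k+1}-H_k\bigr) - \frac{a_{k+1}}{k+1}u_{k+1}.$$
Since $H_{k+1}-H_k = \frac{1}{k+1}$, the last two terms combine to $-\frac{a_{k+1}}{k+1}(u_{k+1}+u_k)$.

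Next we simplify these correction terms. Using $u_{k+1} = \frac{2k+2}{2k+1}u_k$ and $a_{k+1}=(2k+1)k^d$, we get
$$\frac{a_{k+1}}{k+1}(u_k+u_{k+1}) = \frac{(2k+1)k^d}{k+1}\cdot \frac{4k+3}{2k+1}u_k = \frac{(4k+3)k^d}{k+1}u_k,$$
which is not polynomial. So this splitting is the wrong one, and we instead keep $H_k$ and telescope $w_k:=u_kH_k$ directly.

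Write $w_{k+1}-w_k = (u_{k+1}-u_k)H_k + \frac{u_{k+1}}{k+1}$. Then
$$k^d u_k H_k = a_{k+1}(w_{k+1}-w_k) - a_{k+1}\frac{u_{k+1}}{k+1}.$$
Here $a_{k+1}u_{k+1}/(k+1) = (2k+1)k^d\cdot\frac{2}{2k+1}u_k = 2k^du_k$. Summing over $k=0,\dots,n$ and applying Abel's transformation exactly as in the proof of Theorem \ref{main_thm_rec1} (the boundary term at $k=0$ vanishes since $w_0=0$) gives
$$V_d(n) = a_{n+1}w_{n+1} - \sum_{k=1}^n (a_{k+1}-a_k)w_k - 2U_d(n).$$
We then insert the already-computed expansion $a_{k+1}-a_k=(2d+2)k^d+\sum_{j=1}^d c_{d,j}(-1)^jk^{d-j}$, which yields
$$(2d+3)V_d(n) = a_{n+1}u_{n+1}H_{n+1} - \sum_{j=1}^d(-1)^jc_{d,j}V_{d-j}(n) - 2U_d(n).$$
Finally, $a_{n+1}u_{n+1} = (2n+1)n^d\frac{2(n+1)}{2n+1}u_n = \frac{(n+1)2^{2n+1}n^d}{\binom{2n}{n}}$, which matches the stated boundary term.

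The only delicate point is the bookkeeping at $k=0$ and in the $V_0$ terms. For $d\ge1$ the $k=0$ summand contributes $0^d=0$. In $V_{d-j}$ with $j=d$, the $k=0$ term is multiplied by $H_0=0$, so the lower index convention (sum from $0$ or $1$) is harmless; this also justifies taking $V_0(n)$ from \eqref{central_bin0_har}.
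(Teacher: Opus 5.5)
Your argument is correct and is essentially the paper's proof: your identity $k^d w_k = a_{k+1}(w_{k+1}-w_k) - 2k^d u_k$ is the paper's relation $(2k+1)(v_{k+1}-v_k)=v_k+2u_k$ multiplied by $k^d$, followed by the same Abel summation and the same expansion of $a_{k+1}-a_k$. Delete the abandoned first attempt: its display contains a spurious extra term $-a_{k+1}u_k(H_{k+1}-H_k)$, and without that term the decomposition is exactly the one you use afterwards, so the ``non-polynomial'' obstruction came only from an algebra slip.
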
 
\begin{proof}
Again, let $a_k=(2k-1)(k-1)^d$ and $c_{d,j}$ be defined as in Theorem \ref{main_thm_rec1}. Setting $v_k = \frac{2^{2k}}{\binom{2k}{k}}H_k$, 
we easily obtain 
$$(2k+1)(v_{k+1}-v_k) = v_k + 2u_k$$ 
and deduce that
$$V_d(n) = \sum_{k=0}^n k^d v_k = \sum_{k=0}^n a_{k+1}(v_{k+1}-v_k) - 2\sum_{k=1}^n u_k k^d = a_{n+1}v_{n+1} 
- \sum_{k=1}^n(a_{k+1}-a_k) v_k - 2U_d(n).$$
The theorem now follows from
$$V_d(n) = a_{n+1}v_{n+1} - (2d+2)V_d(n) - \sum_{j=1}^d (-1)^j c_{d,j}V_{d-j}(n) - 2U_d(n).$$
\end{proof}

As a corollary, we mention the values of $V_1(n)$ and $V_2(n)$:
\begin{corollary}
We have
\begin{equation}
V_1(n) = \frac{(3n+1)(n+1)2^{2n+1}H_n}{15\binom{2n}{n}}-\frac{(18n^2-11n+1)2^{2n+1}}{225\binom{2n}{n}}+\frac{2}{225}
\end{equation}
and
\begin{equation}
V_2(n) = \frac{(n+1)(15n^2+12n-1)2^{2n+1}H_n}{105\binom{2n}{n}}-\frac{(450n^3-261n^2-328n+173)2^{2n+1}}{11025\binom{2n}{n}}+\frac{346}{11025}.
\end{equation}
\end{corollary}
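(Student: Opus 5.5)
The plan is to read off $V_1(n)$ and $V_2(n)$ from the recursion of Theorem \ref{main_thm_rec2}, taking $V_0(n)$ from \eqref{central_bin0_har} and $U_1(n),U_2(n)$ from the preceding corollary. First note that the $k=0$ term of \eqref{central_bin0_har} vanishes since $H_0=0$, so $V_0(n)$ equals the right-hand side of \eqref{central_bin0_har}. Write $w_n = 2^{2n+1}/\binom{2n}{n}$ throughout. Then every quantity in play has the shape $w_n\,(P(n)H_n + Q(n)) + c$ with polynomials $P,Q$ and a constant $c$, and all computations reduce to bookkeeping of these three components.

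For $d=1$ we have $c_{1,1}=\binom{2}{2}+\binom{1}{2}=1$. Theorem \ref{main_thm_rec2} therefore gives
$$5V_1(n) = w_n\, n(n+1)H_{n+1} + V_0(n) - 2U_1(n).$$
Write $H_{n+1}=H_n+\frac{1}{n+1}$, so the first term becomes $w_n\,(n(n+1)H_n + n)$. Now collect the pieces. The $H_n$ coefficient is $w_n\bigl(n(n+1)+\tfrac{n+1}{3}\bigr)=w_n\frac{(3n+1)(n+1)}{3}$. After dividing by $5$ this matches the stated $\frac{(3n+1)(n+1)}{15}$. The non-harmonic part of the $w_n$ coefficient is
$$n - \frac{2n-1}{9} - \frac{2(3n+1)(n+1)}{15},$$
and the constant is $-\frac{2}{9}+\frac{4}{15}=\frac{2}{45}$. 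Dividing by $5$ should give $-\frac{18n^2-11n+1}{225}$ and $\frac{2}{225}$. Both are one-line rational checks.

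For $d=2$ we have $c_{2,1}=\binom{3}{2}+\binom{2}{2}=4$ and $c_{2,2}=\binom{3}{3}+0=1$. The recursion becomes
$$7V_2(n)=w_n\,n^2(n+1)H_{n+1} + 4V_1(n) - V_0(n) - 2U_2(n).$$
Again replace $H_{n+1}$ by $H_n+\frac{1}{n+1}$ and substitute the now-known $V_1(n)$, together with $V_0(n)$ and $U_2(n)$. The $H_n$ coefficient is
$$n^2(n+1)+\tfrac{4(3n+1)(n+1)}{15}-\tfrac{n+1}{3}=\tfrac{(n+1)(15n^2+12n-1)}{15},$$
which after division by $7$ agrees with the claim. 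The constant is $4\cdot\frac{2}{225}+\frac{2}{9}-\frac{4}{105}$; putting this over $11025\cdot 7$ should give $\frac{346}{11025}$. The main, though purely mechanical, obstacle is the cubic non-harmonic polynomial. This means combining $n^2$ with $-\frac{4(18n^2-11n+1)}{225}$, $+\frac{2n-1}{9}$ and $-\frac{2(15n^2+12n-1)(n+1)}{105}$ over the common denominator $11025$, then dividing by $7$. I would carry this out coefficient by coefficient.

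As a safeguard against arithmetic slips, I would verify both final formulas at $n=1$ and $n=2$ directly from the definition. For example, $V_1(1)=2$, and the formula gives $\frac{4\cdot 2\cdot 2}{15}-\frac{8\cdot 2}{225}+\frac{2}{225}=\frac{240-14}{225}$. That already shows the need to check the stated constants carefully: any discrepancy would indicate either a typo in the statement or the need to recheck the bookkeeping.
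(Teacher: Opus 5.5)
Your route is the paper's: the corollary is Theorem~\ref{main_thm_rec2} at $d=1,2$, fed with $V_0$ from \eqref{central_bin0_har} and with $U_1,U_2$, and your recursions, $c_{d,j}$ values and $H_n$-coefficients are all right. The remaining bookkeeping does close: before dividing by $5$ and $7$ you get $-(18n^2-11n+1)/45$ and $2/45$ for $d=1$, and $-(450n^3-261n^2-328n+173)/1575$ and $346/1575$ for $d=2$. The only slip is in your sanity check, which used $2^{2n}/\binom{2n}{n}$ instead of $w_n$: in fact $w_1=2^{3}/\binom{2}{1}=4$, not $2$, so the formula gives $\tfrac{32}{15}-\tfrac{32}{225}+\tfrac{2}{225}=2=V_1(1)$, and there is no discrepancy or typo in the statement.
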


\begin{remark}\label{PQ-Remark}
From \eqref{central_bin0_har} and noting that
\begin{equation*}
V_1(n) = \frac{2^{2n+1}}{15 \binom{2n}{n}}\left ((n+1)(3n+1)H_n -\frac{(2n-1)(9n-1)}{15}\right ) + \frac{2}{225}
\end{equation*}
and
\begin{equation*}
V_2(n) = \frac{2^{2n+1}}{105 \binom{2n}{n}}\left ((n+1)(15n^2+12n+1)H_n -\frac{(2n-1)(225n^2-18n-173)}{105}\right ) + \frac{346}{11025}
\end{equation*}
we recognize that $V_d(n)$ can be written as
\begin{equation}
\sum_{k=1}^n \frac{2^{2k}}{\binom{2k}{k}} \cdot k^d \,H_k 
= \frac{2^{2n+1}}{N(d) \binom{2n}{n}} \left ( (n+1)P_d(n)H_n -\frac{(2n-1)Q_d(n)}{N(d)}\right ) + \frac{C(d)}{N^2(d)},
\end{equation}
with polynomials $P_d(n)$ and $Q_d(n)$ both of degree $d$, $C(d)$ a constant term and the normalizing factor $N(d)$ given by
$$N(d) = \prod_{j=0}^{d+1} (2j+1).$$
An analogous observation can be made for $U_d(n)$.
\end{remark}

\section{A second family of harmonic sums}

\begin{theorem}\label{main_thm2}
For all $n\geq 1$ and all $m\in\mathbb C\setminus\mathbb Z^{-}$ and $m\neq -3/2$ we have
\begin{align}
\sum_{k=1}^n \frac{2^{-2k}}{2k-1} \frac{\binom{2k}{k}}{\binom{m+k+1}{k}} H_{m+k+1} &= \frac{4m+5}{(m+1)(2m+3)^2} + \frac{H_m}{2m+3} \nonumber \\
&\qquad - \frac{2^{-2n}}{2m+3} \frac{\binom{2n}{n}}{\binom{m+n+1}{n}}\left (H_{m+n+1} + \frac{2}{2m+3} \right ).
\end{align}
\end{theorem}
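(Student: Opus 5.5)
The plan is to mirror the proof of Theorem \ref{main_thm1}. First I will establish a closed form for the corresponding sum without harmonic numbers by telescoping. Then I will differentiate that identity with respect to $m$.

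Set
$$t_k = t_k(m) = 2^{-2k}\frac{\binom{2k}{k}}{\binom{m+k+1}{k}}, \qquad t_0=1 .$$
From $\binom{2k}{k}=\frac{2(2k-1)}{k}\binom{2k-2}{k-1}$ and $\binom{m+k+1}{k}=\frac{m+k+1}{k}\binom{m+k}{k-1}$, a short computation gives the ratio
$$\frac{t_k}{t_{k-1}}=\frac{2k-1}{2(m+k+1)}.$$
Hence
$$t_{k-1}-t_k = t_k\,\frac{2(m+k+1)-(2k-1)}{2k-1}=\frac{2m+3}{2k-1}\,t_k .$$
Summing over $k=1,\dots,n$ telescopes to the base identity
$$\sum_{k=1}^n \frac{t_k}{2k-1} = \frac{1-t_n}{2m+3},$$
valid for $m\in\mathbb C\setminus\mathbb Z^{-}$, $m\neq-3/2$.

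Next I differentiate the base identity with respect to $m$. Since $\frac{d}{dm}\binom{m+k+1}{k}=\binom{m+k+1}{k}(H_{m+k+1}-H_{m+1})$, we have $\frac{d}{dm}t_k=-t_k(H_{m+k+1}-H_{m+1})$. Differentiating both sides yields
$$-\sum_{k=1}^n\frac{t_k}{2k-1}\bigl(H_{m+k+1}-H_{m+1}\bigr) = \frac{t_n(H_{m+n+1}-H_{m+1})}{2m+3}-\frac{2(1-t_n)}{(2m+3)^2}.$$
I then move the $H_{m+1}$ terms to the right and evaluate $\sum t_k/(2k-1)$ once more by the base identity. This gives
$$\sum_{k=1}^n\frac{t_k}{2k-1}H_{m+k+1} = \frac{H_{m+1}}{2m+3}+\frac{2}{(2m+3)^2}-\frac{t_n}{2m+3}\Bigl(H_{m+n+1}+\frac{2}{2m+3}\Bigr).$$

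Finally, I write $H_{m+1}=H_m+\frac1{m+1}$ and combine the constants:
$$\frac{1}{(m+1)(2m+3)}+\frac{2}{(2m+3)^2}=\frac{4m+5}{(m+1)(2m+3)^2}.$$
This is exactly the stated right-hand side.

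The only delicate point is justifying the differentiation in $m$ for complex $m$, which the paper already uses in Theorem \ref{main_thm1}: both sides are rational functions of $m$ regular on the stated domain. Should one prefer to avoid calculus, I would instead verify the final identity by induction on $n$. The induction step reduces to the relation $t_{n-1}=\frac{2(m+n+1)}{2n-1}t_n$ together with $H_{m+n+1}=H_{m+n}+\frac1{m+n+1}$. I expect this step to be routine, so the main work is finding the base telescoping identity.
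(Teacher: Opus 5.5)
Your proof is correct and follows the same strategy as the paper: differentiate a parametric base identity with respect to $m$, then use the base identity again to eliminate the $H_{m+1}$ terms. The only difference is that you derive the base identity $\sum_{k=1}^n \frac{t_k}{2k-1}=\frac{1-t_n}{2m+3}$ yourself by telescoping, whereas the paper cites \eqref{central_bin2}; your identity is exactly \eqref{central_bin2} after $k\to n-k$ and division by $2^{2n}\binom{m+n+1}{n}$, and differentiating this normalized form lets you skip the intermediate identity \eqref{interim} and the final renormalization step.
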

\begin{proof}
Bataille and Frontczak have shown in \cite[Theorem 4.2]{Bataille} the combinatorial identity
\begin{equation}\label{central_bin2}
\sum_{k=0}^{n-1} \frac{2^{2k}}{2(n-k)-1} \frac{\binom{m+n+1}{k} \binom{2(n-k)}{n-k}}{\binom{n}{k}} 
= 2^{2n} \frac{m+n+1}{(m+1)(2m+3)}\binom{m+n}{n} - \frac{1}{2m+3}\binom{2n}{n}.
\end{equation}
Differentiate w.r.t. $m$ and use
$$\frac{d}{dm} \binom{m+n+1}{k} = \binom{m+n+1}{k} (H_{m+n+1}-H_{m+n-k+1}).$$
When simplifying use \eqref{central_bin2} again. After changing the summation index via
$$\sum_{k=0}^{n-1} a_k = \sum_{k=1}^{n} a_{n-k},$$
this will result in
\begin{align}\label{interim}
&\sum_{k=1}^n \frac{2^{-2k}}{2k-1} \frac{\binom{m+n+1}{m+k+1} \binom{2k}{k}}{\binom{n}{k}} H_{m+k+1} = 
\frac{(m+n+1)}{(m+1)(2m+3)} \binom{m+n}{n} \left ( H_{n+m+1} - H_{n+m} + H_m \right ) \nonumber \\
&\qquad + \frac{n(4m+5)+2(m+1)^2}{(m+1)^2(2m+3)^2} \binom{m+n}{n} - \frac{2^{-2n}}{2m+3} \binom{2n}{n}\left (H_{n+m+1} + \frac{2}{2m+3} \right ).
\end{align}
The final step in to make use of
$$\binom{u+1}{v+1} = \frac{u+1}{v+1} \binom{u}{v},$$
write the left-hand side of \eqref{interim} as
$$\binom{m+n+1}{n} \sum_{k=1}^n \frac{2^{-2k}}{2k-1} \frac{\binom{2k}{k}}{\binom{m+k+1}{k}} H_{m+k+1}$$
simplify further keeping in mind that
$$\frac{\binom{m+n}{n}}{\binom{m+n+1}{n}} = \frac{m+1}{m+n+1}.$$
\end{proof}

\begin{corollary}
We have
\begin{equation}
\sum_{k=1}^n \frac{2^{-2k}}{2k-1} \frac{\binom{2k}{k}}{k+1} H_{k+1} 
= \frac{5}{9} - \frac{2^{-2n}}{3(n+1)} \binom{2n}{n} \left ( H_{n+1} + \frac{2}{3}\right )
\end{equation}
and
\begin{equation}
\sum_{k=1}^n \frac{2^{-2k}}{2k-1} \frac{\binom{2k}{k}}{(k+1)(k+2)} H_{k+2} 
= \frac{19}{100} - \frac{2^{-2n}}{5(n+1)(n+2)} \binom{2n}{n} \left ( H_{n+2} + \frac{2}{5}\right ).
\end{equation}
\end{corollary}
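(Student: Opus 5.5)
The plan is to specialize Theorem \ref{main_thm2} at $m=0$ and $m=1$, in the same way the earlier corollaries were obtained from Theorem \ref{main_thm1}.

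For the first identity, set $m=0$. Then $\binom{k+1}{k}=k+1$, so the summand becomes $\frac{2^{-2k}}{2k-1}\frac{\binom{2k}{k}}{k+1}H_{k+1}$. On the right-hand side we have $H_0=0$. The constant term is
$$\frac{4m+5}{(m+1)(2m+3)^2}=\frac{5}{9}.$$
The last term becomes
$$-\frac{2^{-2n}}{3}\,\frac{\binom{2n}{n}}{n+1}\left(H_{n+1}+\frac{2}{3}\right),$$
since $\binom{n+1}{n}=n+1$. This is exactly the first claimed formula.

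For the second identity, set $m=1$. Then $\binom{k+2}{k}=\frac{(k+1)(k+2)}{2}$, so the left-hand side of Theorem \ref{main_thm2} equals twice the desired sum $S$. On the right-hand side:

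\begin{itemize}
\item The first two terms give $\frac{9}{2\cdot 25}+\frac{1}{5}=\frac{19}{50}$, using $H_1=1$.
\item In the last term, $\binom{n+2}{n}=\frac{(n+1)(n+2)}{2}$, so it becomes $-\frac{2\cdot 2^{-2n}}{5(n+1)(n+2)}\binom{2n}{n}\left(H_{n+2}+\frac{2}{5}\right)$.
\end{itemize}

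Dividing both sides by $2$ gives $S=\frac{19}{100}-\frac{2^{-2n}}{5(n+1)(n+2)}\binom{2n}{n}\left(H_{n+2}+\frac{2}{5}\right)$, which is the second formula.

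There is no real obstacle here. The only care needed is the bookkeeping with the factor $2$ from $\binom{k+2}{k}$ in the $m=1$ case, and checking that both values $m=0,1$ are admissible in Theorem \ref{main_thm2}, which they are.
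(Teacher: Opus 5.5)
Your proposal is correct and is exactly the paper's proof: specialize Theorem \ref{main_thm2} at $m=0$ and $m=1$. Your arithmetic also checks out, including the factor $2$ from $\binom{k+2}{k}$ and $\binom{n+2}{n}$ in the $m=1$ case, which gives $\tfrac{19}{50}$ before halving.
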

\begin{proof}
Set $m=0$ and $m=1$ in turn in Theorem \ref{main_thm2}.
\end{proof}

\begin{corollary}
We have
\begin{equation}
\sum_{k=1}^n \frac{2^{-2k}}{2k-1} \frac{\binom{2k}{k}}{\binom{n+k}{k}} H_{n+k} 
= \frac{1}{2n+1}\left (H_n + \frac{2}{2n+1}\right ) - \frac{2^{-2n}}{2n+1} \left (H_{2n} + \frac{2}{2n+1}\right ).
\end{equation}
\end{corollary}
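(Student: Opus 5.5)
The plan is to specialize Theorem \ref{main_thm2} at $m=n-1$. The summand there contains $\binom{m+k+1}{k}$ and $H_{m+k+1}$, and these become $\binom{n+k}{k}$ and $H_{n+k}$, which is exactly the left-hand side of the corollary. The value $m=n-1$ is a nonnegative integer, so it is admissible in Theorem \ref{main_thm2}. The summation upper limit stays $n$, since Theorem \ref{main_thm2} holds for every upper limit $n\ge 1$ independently of $m$. It is therefore legitimate to take the upper limit equal to $n$ while setting $m=n-1$. (Choosing $m=n$ would instead give $\binom{n+k+1}{k}$, which is the wrong shape.)

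Substituting $m=n-1$ gives $2m+3=2n+1$, $m+1=n$ and $4m+5=4n+1$. The constant part becomes
\[
\frac{4n+1}{n(2n+1)^2}+\frac{H_{n-1}}{2n+1}.
\]
In the boundary term, $\binom{m+n+1}{n}=\binom{2n}{n}$, which cancels the factor $\binom{2n}{n}$ in the numerator. What remains is
\[
-\frac{2^{-2n}}{2n+1}\left(H_{2n}+\frac{2}{2n+1}\right),
\]
which matches the second term of the claim exactly.

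Finally, write $H_{n-1}=H_n-\frac1n$. The constant part then becomes
\[
\frac{H_n}{2n+1}+\frac{4n+1-(2n+1)}{n(2n+1)^2}
=\frac{H_n}{2n+1}+\frac{2}{(2n+1)^2},
\]
and this equals $\frac{1}{2n+1}\left(H_n+\frac{2}{2n+1}\right)$, as required. There is no real obstacle in this argument. The only point needing care is the decoupling of the parameter $m$ from the summation limit $n$ in Theorem \ref{main_thm2}, which is what allows the substitution to be made before fixing the limit.
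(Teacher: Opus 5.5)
Your proposal is correct and matches the paper's proof exactly: specialize Theorem \ref{main_thm2} at $m=n-1$, which is admissible since $n\ge 1$. Your simplification of the constant part via $H_{n-1}=H_n-\frac1n$, giving $\frac{4n+1}{n(2n+1)^2}-\frac{1}{n(2n+1)}=\frac{2}{(2n+1)^2}$, fills in the arithmetic the paper leaves implicit, and it checks out.
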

\begin{proof}
Set $m=n-1$ in Theorem \ref{main_thm2}.
\end{proof}

The last identity reminds us in a way of
\begin{equation}\label{Riordan_id}
\sum_{k=0}^n \frac{2^{-2k}}{2k-1} \binom{2k}{k} = - 2^{-2n} \binom{2n}{n},
\end{equation}
which can be found in Riordan's book \cite[p.130]{Riordan}. More identities of this nature can be found in \cite{Adegoke0}, for instance.

\section{A final identity}

Our final identity involves harmonic numbers of order 2, that is, $H_n^{(2)}=\sum_{k=1}^n 1/k^2$. 

\begin{theorem}
For all $n\geq 1$ and all $m\in\mathbb C\setminus\mathbb Z^{-}$ and $m\neq -3/2$ we have
\begin{equation}\label{eq_har_final}
\sum_{k=0}^n 2^{2k} \frac{\binom{m+k}{k}}{\binom{2k}{k}}\left ( H_{k+m}^2 - H_{k+m}^{(2)}\right ) 
= \frac{A(m,n)}{2m+3} - \frac{B(m,n)}{(2m+3)^2} + 8 \frac{C(m,n)}{(2m+3)^3},
\end{equation}
with
\begin{equation}
A(m,n) = H_m^2-H_m^{(2)} + 2^{2n+1} \frac{\binom{m+n}{n}}{\binom{2n}{n}} \left ( H_{m+n}+(m+n+1)\left (H_{m+n}^2-H_{m+n}^{(2)}\right )\right ),
\end{equation}
\begin{equation}
B(m,n) = 2^{2n+2} (m+n+1) \frac{\binom{m+n}{n}}{\binom{2n}{n}}H_{m+n} + 4H_m + 2^{2n+1}(2n-1) \frac{\binom{m+n}{n}}{\binom{2n}{n}} H_{m+n},
\end{equation}
and
\begin{equation}
C(m,n) = 2^{2n} (2n-1) \frac{\binom{m+n}{n}}{\binom{2n}{n}} + 1.
\end{equation}
In particular, we have
\begin{align}\label{eq_har_final}
\sum_{k=0}^n \frac{2^{2k}}{\binom{2k}{k}}\left ( H_{k}^2 - H_{k}^{(2)}\right ) 
&= \frac{2^{2n+1}}{3 \binom{2n}{n}}\left ((n+1)\left( H_{n}^2 - H_{n}^{(2)}\right ) -  \frac{2}{3} (2n-1) H_n \right ) \nonumber \\
&\qquad + \frac{8}{27}\left ((2n-1) \frac{2^{2n}}{\binom{2n}{n}} + 1 \right).
\end{align}
\end{theorem}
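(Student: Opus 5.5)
We differentiate the identity of Theorem \ref{main_thm1} once more with respect to $m$, in the same way that Theorem \ref{main_thm1} was obtained from \eqref{central_bin}. Equivalently, we differentiate \eqref{central_bin} twice. The two derivative rules we need are
$$\frac{d}{dm}\binom{m+k}{k}=\binom{m+k}{k}(H_{k+m}-H_m), \qquad \frac{d}{dm}H_{k+m}=\zeta(2)-H^{(2)}_{k+m}.$$
The second rule follows from \eqref{Har_psi}, since $\psi'(z+1)=\zeta(2)-H_z^{(2)}$ extends the finite second-order harmonic numbers analytically. Applying the product rule to the summand $\binom{m+k}{k}H_{k+m}$ gives
$$\binom{m+k}{k}\left(H_{k+m}^2-H_{k+m}^{(2)}\right)-H_m\binom{m+k}{k}H_{k+m}+\zeta(2)\binom{m+k}{k}.$$
Summed against $2^{2k}/\binom{2k}{k}$, the left-hand side of Theorem \ref{main_thm1} therefore differentiates to
$$S_2-H_m S_1+\zeta(2)S_0.$$
Here $S_2$ is the sum we want, $S_1$ is the sum in Theorem \ref{main_thm1}, and $S_0$ is the sum in \eqref{central_bin}. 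Since $S_1$ and $S_0$ are known in closed form, $S_2$ equals the $m$-derivative of the right-hand side of Theorem \ref{main_thm1}, plus $H_mS_1$, minus $\zeta(2)S_0$.

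Next we differentiate the right-hand side term by term. Write $R=2^{2n}\binom{m+n}{n}/\binom{2n}{n}$, so that $R'=R(H_{m+n}-H_m)$. We also need $\frac{d}{dm}(2m+3)^{-j}=-2j(2m+3)^{-j-1}$, $\frac{d}{dm}H_m=\zeta(2)-H_m^{(2)}$, and $\frac{d}{dm}H_{n+m}=\zeta(2)-H_{n+m}^{(2)}$. We then add $H_mS_1$ and subtract $\zeta(2)S_0$, inserting the closed forms of \eqref{central_bin} and Theorem \ref{main_thm1}. Two cancellations should occur.

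First, every $\zeta(2)$ term must cancel. This is forced, because $\zeta(2)$ enters $S_0$ with exactly the same coefficients that it picks up when the right-hand side is differentiated. Second, all terms containing $H_m$ multiplied by $R$-terms cancel, because the $-H_m$ coming from $R'$ is offset by $H_mS_1$. This leaves $2R(m+n+1)(H_{n+m}^2-H_{n+m}^{(2)})$ and $H_m^2-H_m^{(2)}$ in the $(2m+3)^{-1}$ part. The factor $(m+n+1)$ also differentiates to $1$, which produces the extra $2RH_{m+n}$ in $A$.

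The $(2m+3)^{-2}$ terms collect into $B$. They come from two sources: differentiating $(2m+3)^{-1}$ in the first bracket of Theorem \ref{main_thm1}, and differentiating $R$ inside the second bracket. The $(2m+3)^{-3}$ term, $8C/(2m+3)^3$, comes from differentiating $(2m+3)^{-2}$. Note that $C(m,n)$ is exactly the bracket $2^{2n}(2n-1)\binom{m+n}{n}/\binom{2n}{n}+1$ appearing in Theorem \ref{main_thm1}.

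The main obstacle is the bookkeeping in this collection step. One must check that the coefficients really assemble into the stated $A$, $B$, $C$; in particular, the leftover $H_m$ terms must collapse to the isolated $4H_m$ in $B$. I would organize the computation by powers of $(2m+3)^{-1}$ and verify the result numerically at $n=1$, $m=0$ to catch sign slips. The special case then follows by setting $m=0$, where $H_0=H_0^{(2)}=0$, $\binom{n}{n}=1$, and $2m+3=3$, and simplifying.
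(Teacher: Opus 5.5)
Your proposal is correct and is essentially the paper's own proof. You differentiate Theorem \ref{main_thm1} in $m$, rewrite the left side as $S_2 - H_m S_1 + \zeta(2) S_0$, substitute the closed forms of $S_1$ and $S_0$, and let the $\zeta(2)$ terms cancel before collecting powers of $(2m+3)^{-1}$; carrying this out does give exactly the stated $A$, $B$, $C$.

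One small bookkeeping correction: the $(2m+3)^{-2}$ terms have a third source besides the two you list, namely $H_mS_1$, which contributes $-2H_m\bigl((2n-1)R+1\bigr)/(2m+3)^2$. Its first piece cancels the $+2(2n-1)RH_m$ coming from $R'$, and its second piece supplies half of the $4H_m$ in $B$.
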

\begin{proof}
From \eqref{Har_psi} we get
$$\frac{d}{dz} H_z = \frac{d}{dz} \psi(z+1) = \sum_{j=0}^\infty \frac{1}{(j+1+z)^2} = \zeta(2) - H_z^{(2)},$$
where $H_z^{(2)}$ is defined by $H_0^{(2)}=0$ and $H_z^{(2)}=H_{z-1}^{(2)}+\frac{1}{z^2}$.\\
We differentiate the identity in Theorem \ref{main_thm1} w.r.t. the parameter $m$. As
$$\frac{d}{dm} \binom{m+k}{k}H_{k+m} = \binom{m+k}{k}\left (H_{m+k}(H_{m+k}-H_m) - H_{m+k}^{(2)} + \frac{\pi^2}{6}\right ),$$
the left hand side is seen to be
$$\sum_{k=0}^n 2^{2k} \frac{\binom{m+k}{k}}{\binom{2k}{k}}\left ( H_{k+m}^2 - H_{k+m}^{(2)}\right ) - H_m \sum_{k=0}^n 2^{2k} \frac{\binom{m+k}{k}}{\binom{2k}{k}}H_{k+m} + \frac{\pi^2}{6} \sum_{k=0}^n 2^{2k} \frac{\binom{m+k}{k}}{\binom{2k}{k}}.$$ 
The second sum is evaluated using Theorem \ref{main_thm1} a second time while the third sum follows from \eqref{central_bin}. Differentiating the right hand side of the identity in Theorem \ref{main_thm1} is straightforward but lengthy. We have
\begin{align*}
&\frac{d}{dm} \left (\frac{1}{2m+3}\left ( 2^{2n+1} (m+n+1)\frac{\binom{m+n}{n}}{\binom{2n}{n}} H_{n+m} + H_m \right )\right ) \\ 
&= - \frac{2}{(2m+3)^2} \left( 2^{2n+1} (m+n+1)\frac{\binom{m+n}{n}}{\binom{2n}{n}} H_{n+m} + H_m \right ) 
+ \frac{1}{2m+3}\left (\frac{\pi^2}{6} - H_m^{(2)} \right ) \\
& + \frac{2^{2n+1}}{(2m+3)\binom{2n}{n}}\left (\binom{m+n}{n} H_{m+n} + (m+n+1)\binom{m+n}{n}\left (H_{m+n}(H_{m+n}-H_m) 
+ \frac{\pi^2}{6} - H_{m+n}^{(2)} \right )\right )
\end{align*}
and
\begin{align*}
\frac{d}{dm} \left (\frac{2}{(2m+3)^2} \left (2^{2n} (2n-1)\frac{\binom{m+n}{n}}{\binom{2n}{n}} + 1\right )\right )
&= \frac{1}{(2m+3)^2} 2^{2n+1} (2n-1) \frac{\binom{m+n}{n}}{\binom{2n}{n}}( H_{n+m} - H_m ) \\
&\qquad - \frac{8}{(2m+3)^3} \left (2^{2n} (2n-1)\frac{\binom{m+n}{n}}{\binom{2n}{n}} + 1\right ).
\end{align*}
We see that the parts containing $\pi^2/6$ cancel out and the final result follows after some steps of simplifications.
\end{proof}

\begin{corollary}
For all $n\geq 1$ we have
\begin{equation}\label{H_squared}
\sum_{k=1}^n \frac{2^{2k}}{\binom{2k}{k}} H_{k}^2 = \frac{44}{27} + \frac{2^{2n+1}}{3 \binom{2n}{n}}\left ((n+1)H_{n}^2 
- \frac{2(2n-1)}{3}H_n + \frac{8n-22}{9} \right ) + \frac{1}{3} \sum_{k=1}^n \frac{2^{2k}}{k^2\,\binom{2k}{k}}
\end{equation}
and also
\begin{equation}\label{H_secorder}
\sum_{k=1}^n \frac{2^{2k}}{\binom{2k}{k}} H_{k}^{(2)} = \frac{4}{3} + \frac{2^{2n+1}}{3 \binom{2n}{n}}\left ((n+1)H_{n}^{(2)} - 2 \right ) 
+ \frac{1}{3} \sum_{k=1}^n \frac{2^{2k}}{k^2\,\binom{2k}{k}}.
\end{equation}
\end{corollary}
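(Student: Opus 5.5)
The plan is to obtain both identities from the special case $m=0$ of the preceding theorem, which gives $\sum_{k=0}^n u_k (H_k^2-H_k^{(2)})$ with $u_k=2^{2k}/\binom{2k}{k}$, combined with one more relation between $\sum u_k H_k^2$ and $\sum u_k H_k^{(2)}$. Writing
$$S_1(n)=\sum_{k=1}^n u_kH_k^2,\qquad S_2(n)=\sum_{k=1}^n u_kH_k^{(2)},\qquad T(n)=\sum_{k=1}^n \frac{u_k}{k^2},$$
the $m=0$ identity gives $S_1-S_2$ in closed form. The $k=0$ term vanishes, so the sum may start at $k=1$. It therefore suffices to prove \eqref{H_secorder}; then \eqref{H_squared} follows by adding. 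As a consistency check, the constants agree: $\frac{44}{27}-\frac43=\frac{8}{27}$, and the $2^{2n}/\binom{2n}{n}$ terms agree as well, since $\frac{2}{3}\cdot\frac{8n-22}{9}+\frac{4}{3}=\frac{16n-8}{27}=\frac{8}{27}(2n-1)$.

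To prove \eqref{H_secorder}, I would mimic the summation by parts used in Theorem \ref{main_thm_rec1} and Theorem \ref{main_thm_rec2}. Set $w_k=u_kH_k^{(2)}$. From $(2k+1)(u_{k+1}-u_k)=u_k$, i.e. $u_{k+1}=\frac{2k+2}{2k+1}u_k$, we get
$$(2k+1)(w_{k+1}-w_k)=w_k+(2k+2)\frac{u_k}{(k+1)^2}=w_k+\frac{2u_k}{k+1}.$$
Writing $w_k=(2k+1)(w_{k+1}-w_k)-\frac{2u_k}{k+1}$ and applying Abel summation with $a_k=2k-1$ (the case $d=0$) gives
$$S_2(n)=(2n+1)w_{n+1}-2S_2(n)-2\sum_{k=0}^n\frac{u_k}{k+1}.$$
Hence $3S_2=(2n+1)w_{n+1}-2\sum_{k=0}^n \frac{u_k}{k+1}$.

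Next I would rewrite the two pieces. First, $\frac{u_k}{k+1}=\frac{2u_{k+1}}{2(k+1)}\cdot\frac{2k+1}{k+1}$, so
$$\frac{2u_k}{k+1}=u_{k+1}\frac{2k+1}{(k+1)^2}=u_{k+1}\left(\frac{2}{k+1}-\frac{1}{(k+1)^2}\right).$$
This leaves $\sum_{j=1}^{n+1}u_j/j$, which must be evaluated in closed form. I expect $\sum_{j=1}^N u_j/j = 2u_N-2$. To check it, note $u_{j}/j$ telescopes because $2(u_j-u_{j-1})=2u_{j-1}/(2j-1)=u_j/j$, using $u_j=\frac{2j}{2j-1}u_{j-1}$. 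With this, $T(n+1)$ appears. Second, I would expand $w_{n+1}=u_{n+1}\big(H_n^{(2)}+1/(n+1)^2\big)$ and use $(2n+1)u_{n+1}=(2n+2)u_n$.

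Collecting everything should give $3S_2=2(n+1)u_nH_n^{(2)}-4u_n+4+T(n)$. This is the claimed formula, since $2u_n=2^{2n+1}/\binom{2n}{n}$. The main obstacle is the bookkeeping of the $1/(n+1)^2$ boundary terms, which must cancel when $T(n+1)$ is converted back to $T(n)$; I would verify the final identity numerically at $n=1$, where $S_2=2$, $T=2$, and the right-hand side is $\frac43+\frac43(2-2)+\frac23=2$.
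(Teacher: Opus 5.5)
Your proof is correct. Its overall architecture matches the paper's: establish \eqref{H_secorder} first, then add the $m=0$ case of the preceding theorem to get \eqref{H_squared}. Your checks of the constant and of the $2^{2n}/\binom{2n}{n}$ coefficients are right.

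Where you differ is in how you derive \eqref{H_secorder}. The paper's steps are:
\begin{enumerate}
\item write $H_k^{(2)}=\sum_{j\le k}1/j^2$ and interchange the order of summation;
\item insert the closed form \eqref{central_bin0} for the tail sums $\sum_{k=j}^n u_k$;
\item rewrite $2^{2j-1}/C_{j-1}$ via the Catalan recursion as $u_j(2/j-1/j^2)$;
\item quote Parker's formula.
\end{enumerate}
You instead set up the first-order relation $(2k+1)(w_{k+1}-w_k)=w_k+2u_k/(k+1)$ for $w_k=u_kH_k^{(2)}$. You then run the same Abel summation as in Theorems \ref{main_thm_rec1} and \ref{main_thm_rec2}, and you prove Parker's formula yourself by the telescoping $u_j/j=2(u_j-u_{j-1})$.

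The two arguments are really one summation by parts organized differently, since interchanging the order of summation is Abel summation against the partial sums $U_0$. Both reach the same combination $\sum u_j(2/j-1/j^2)$, over $j\le n$ in the paper and $j\le n+1$ in yours. The paper's version is shorter given the cited closed forms. Yours is self-contained and shows that sums like $\sum u_k k^d H_k^{(2)}$ fit the recursive scheme of Section 3, just as $V_d$ does.

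Two small points:
\begin{itemize}
\item The intermediate expression $\frac{u_k}{k+1}=\frac{2u_{k+1}}{2(k+1)}\cdot\frac{2k+1}{k+1}$ is off by a factor of $2$: its right side equals $2u_k/(k+1)$. Your next display is correct, so nothing downstream is affected.
\item The ``collecting everything'' step does go through. We have $(2n+1)w_{n+1}+u_{n+1}/(n+1)^2=2(n+1)u_nH_n^{(2)}+2u_{n+1}/(n+1)$ and $2u_{n+1}/(n+1)-4u_{n+1}=-4u_n$. Together these give exactly $3S_2=2(n+1)u_nH_n^{(2)}-4u_n+4+T(n)$.
\end{itemize}
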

\begin{proof}
We calculate as follows
\begin{align*}
\sum_{k=1}^n \frac{2^{2k}}{\binom{2k}{k}} H_{k}^{(2)} &= \sum_{k=1}^n \frac{2^{2k}}{\binom{2k}{k}}\sum_{j=1}^k \frac{1}{j^2} 
= \sum_{j=1}^n \frac{1}{j^2} \sum_{k=j}^n \frac{2^{2k}}{\binom{2k}{k}} \\
&= \sum_{j=1}^n \frac{1}{j^2} \left (\sum_{k=1}^n \frac{2^{2k}}{\binom{2k}{k}} - \sum_{k=1}^{j-1} \frac{2^{2k}}{\binom{2k}{k}} \right ) \\
&= \frac{1}{3}\frac{2^{2n+1}}{C_n} H_n^{(2)} - \frac{1}{3}\sum_{j=1}^n \frac{1}{j^2} \frac{2^{2j-1}}{C_{j-1}} \\
&= \frac{1}{3}\frac{2^{2n+1}}{C_n} H_n^{(2)} - \frac{2}{3}\sum_{j=1}^n \frac{2^{2j}}{j \binom{2j}{j}} 
+ \frac{1}{3}\sum_{j=1}^n \frac{2^{2j}}{j^2 \binom{2j}{j}}, \\
\end{align*}
where in the last step we have used the recursion
$$C_{n-1}= \frac{(n+1)C_n}{2(2n-1)}.$$
The expression for the first sum
$$\sum_{j=1}^n \frac{2^{2j}}{j \binom{2j}{j}} = 2 \left (\frac{2^{2n}}{\binom{2n}{n}} - 1 \right )$$
is known as Parker's formula (see \cite{Witula} or Theorem 4.1 in \cite{Sprugnoli}) and we get \eqref{H_secorder}. 
Inserting \eqref{H_secorder} into \eqref{eq_har_final} gives \eqref{H_squared} and completes the proof.
\end{proof}

\begin{remark}
The harmlessly looking sum 
$$\sum_{j=1}^n \frac{2^{2j}}{j^2 \binom{2j}{j}}$$
is hard to evaluate. We could not find it in the literature and it escaped all of our efforts.
\end{remark}

\section{Conclusion}

This paper is an addendum to \cite{Bataille}. We have applied the well-known derivative operator approach to derive expressions 
for seemingly new types of sums involving harmonic numbers and (inverse) central binomial coefficients. 
The identities \eqref{central_bin_har1}, \eqref{central_bin_evenhar} and \eqref{eq_har_final} which are special cases of our more general findings highlight our study. Moreover, we have established recursive solutions for the power sums
\begin{equation*}
U_d(n) = \sum_{k=1}^n \frac{2^{2k}}{\binom{2k}{k}} \cdot k^d \quad \mbox{and}\quad  
V_d(n) = \sum_{k=1}^n \frac{2^{2k}}{\binom{2k}{k}}\cdot k^d\,H_k,
\end{equation*}
where $d$ is a positive integer. It could be interesting to study the polynomials $P_d(n)$ and $Q_d(n)$ appearing in 
Remark \ref{PQ-Remark} further. As another line of research it seems worthwhile to move into the same direction as 
Mansour in \cite{Mansour} and Batir and Sofo in \cite{Batir2} who derived expressions for the polynomials
\begin{equation*}
\sum_{k=0}^n \frac{x^k}{\binom{n}{k}} \qquad \mbox{and}\qquad  \sum_{k=0}^n \frac{2^{2k}}{\binom{2k}{k}}\,x^k.
\end{equation*}
Doing so, a better understanding of the polynomials
\begin{equation*}
\sum_{k=0}^n \frac{2^{2k}}{\binom{2k}{k}}\,H_k\,x^k,  \qquad \sum_{k=0}^n \frac{2^{2k}}{\binom{2k}{k}}\,O_k\,x^k, \qquad\mbox{and}\qquad  
\sum_{k=0}^n \frac{2^{2k}}{\binom{2k}{k}}\,H_{2k}\,x^k
\end{equation*}
is desirable.


\end{document}